\newcommand{\E}{\mathbb{E}}
\newcommand{\PP}{\mathbb{P}}
\newcommand{\R}{\mathbb{R}}
\newcommand{\uS}{\mathbb{S}}
\newcommand{\One}{\mathbbm{1}}
\newcommand{\sF}{\mathscr{F}}
\newcommand{\inner}[2]{#1 \cdot #2}
\newtheorem{theorem}{Theorem}[section]
\newtheorem{lemma}[theorem]{Lemma}
\title{Convergence of the randomized Kaczmarz method for phase retrieval
}
\author{Halyun Jeong\footnote{email: \tt{halyun@cims.nyu.edu}}\, 
and 
C.~Sinan G\"unt\"urk\footnote{email: \tt{gunturk@cims.nyu.edu}}
 \\ Courant Institute, NYU}
\date{June 30, 2017; revised July 16, 2017}
\begin{document}

\maketitle

\begin{abstract}
The classical Kaczmarz iteration and its randomized variants are popular tools for fast inversion of linear overdetermined systems. This method extends naturally to the setting of the phase retrieval problem via substituting at each iteration the phase of any measurement of the available approximate solution for the unknown phase of the measurement of the true solution. Despite the simplicity of the method, rigorous convergence guarantees that are available for the classical linear setting have not been established so far for the phase retrieval setting. In this short note, we provide a convergence result for the randomized Kaczmarz method for phase retrieval in $\R^d$. We show that with high probability a random measurement system of size $m \asymp d$ will be admissible for this method in the sense that convergence in the mean square sense is guaranteed with any prescribed probability. The convergence is exponential and comparable to the linear setting.
\end{abstract}

\section{Introduction}

The classical Kaczmarz iteration is a popular and convenient method for the recovery of any real or complex $d$-dimensional vector $x$ from a collection of sufficient linear measurements $y_i:=\inner{x}{\phi_i}$, $i=1,\dots,m$, where $\inner{u}{v}$ denotes the Euclidean inner product of $u$ and $v$. Starting with any initial point $x_0$, the algorithm produces a succession of iterates $(x_k)_0^\infty$ defined by 
\begin{equation}\label{Kaczmarz-orig}
x_{k+1}  = x_k + \left( y_{t} - \inner{x_k}{\phi_{t}} \right) \frac{\phi_{t}}{\|\phi_{t} \|^2},
\end{equation}
where $t:=t(k) \in \{ 1,...,m \}$ is the index of the selected vector (and the corresponding measurement) at time $k$.
This equation has a simple interpretation: $x_{k+1}$ is the orthogonal projection of $x_k$ on the solution hyperplane $\{u: \inner{u}{\phi_{t}} = y_{t}\}$. In other words, the update $x_{k+1}-x_k$ is the orthogonal projection of the error $x-x_k$ on the chosen direction $\phi_{t}$. Kaczmarz's original scheme cycles through the indices periodically, but it has been shown that random selection generally yields faster convergence. For this and other results, see \cite{SV, NT, NSW, CP}.

This method can be adapted in a straightforward manner to the phase retrieval problem where we only have access to the intensities $\{|y_i|\}_{i=1}^m$: By simply using the sign (phase) of the approximate measurement $\inner{x_k}{\phi_{t}}$ in place of that of $y_{t}$, we get the {\em phase-adapting Kaczmarz iteration}
\begin{equation}\label{Kaczmarz-phase}
x_{k+1}  = x_k + \left( \sigma(\inner{x_k}{\phi_{t}})|y_{t}| - \inner{x_k}{\phi_{t}} \right) \frac{\phi_{t}}{\|\phi_{t} \|^2},
\end{equation}
where $\sigma(w)$ is the sign (or phase) of the scalar $w$, defined by the relation $w = \sigma(w)|w|$. We will assume the convention that $\sigma(0)=1$.
This method has been proposed by various authors (e.g. \cite{Wei, GL}) and has been observed to perform well in practice. For general theory and some other main approaches to the phase retrieval problem, such as PhaseLift and PhaseCut, see \cite{EM2014,CSV2013,WdM2015}.

Intuitively, this scheme has the biggest chance of success if the iterates can be guaranteed to stay reasonably close to one of the solutions of the phaseless equations so that the approximate signs $\sigma(\inner{x_k}{\phi_{t}})$ have a chance to frequently match (or approximate, in the complex case) the true signs and make progress. Each time there is a phase mismatch, the iterate gets an update in the wrong direction, so it is important that this event does not happen too frequently. Hence, unlike the linear classical Kaczmarz scheme \eqref{Kaczmarz-orig} which is not susceptible to the initial condition, a good initialization is needed for the nonlinear phase-adapting version \eqref{Kaczmarz-phase}. There are now good methods for this, such as the truncated spectral initialization \cite{CC}.

\subsection{Contribution}

This paper will be about the real case, i.e. both $x$ and the $\phi_i$ are in $\R^d$. Without loss of generality, we assume that the $\phi_i$ are of unit norm, since we can always run the iteration \eqref{Kaczmarz-phase} with normalized vectors $\hat \phi_i := \phi_i/\|\phi_i\|$ and intensity measurements $|\hat y_i|:= |y_i|/\|\phi_i\|$. Hence we will work with the iteration

\begin{equation}\label{Kaczmarz-phase-normalized}
x_{k+1}  = x_k + \left( \sigma(\inner{x_k}{\phi_{t}})|\inner{x}{\phi_{t}}| - \inner{x_k}{\phi_{t}} \right) \phi_{t}.
\end{equation}

There will be two sources of randomness in this paper. The first and the primary source of randomness is the following:
Given any measurement system $\Phi:=(\phi_1,\dots,\phi_m)$, we will assume that the indices $t$ are chosen uniformly and independently from $\{1,\dots,m\}$. We will call the resulting method {\em phase-adapting randomized Kaczmarz iteration}, irrespective of how $\Phi$ may have been chosen.  
In Section \ref{sec-admissibility}, we present a certain deterministic condition on $\Phi$ called ``$\delta$-admissibility''(which consists of four individual properties), and show that with a $\delta$-admissible $\Phi$ (and for a sufficiently small $\delta$), if the starting relative error is less than $\delta$, then after one iteration the error shrinks in conditional expectation (with respect to the random choice of $t$). We then carry out a probabilistic analysis of convergence in Section \ref{sec-prob-analysis} via ``drift analysis'' and ``hitting-time'' bounds.

The secondary source of randomness will come into play when we want to show that most measurement systems are $\delta$-admissible in the $m \asymp d$ regime. To achieve this, we will assume that the $\phi_i$ are chosen independently from the uniform distribution on the unit sphere $\uS^{d-1}$ in $\R^d$. The standard Gaussian distribution on $\R^d$ can also be used.

For convenience, we state here a summarized theorem combining these two types of randomness. Individual (and stronger) results are stated separately in Sections \ref{sec-admissibility} and \ref{sec-prob-analysis}. We use the notation
$$\mathrm{dist}(u,v):= \min(\|u-v\|, \|u+v\|)$$
to denote the distance between $u$ and $v$ up to a global phase.

\begin{theorem}\label{main-theorem}
Let $\phi_1,\dots,\phi_m$ be chosen independently and uniformly on $\uS^{d-1}$.
There exist absolute positive constants $C$, $c$, and $\delta_0$ such that if $m \geq C d$, then 
with probability $1-\exp(-cm)$ the system $\Phi:=(\phi_1,\dots,\phi_m)$ satisfies the following property: 

For any $0<\varepsilon <1$, if the phase-adapting randomized Kaczmarz method with respect to $\Phi$ is applied to any initial point $x_0$ satisfying the relative error bound
$$\frac{\mathrm{dist}(x,x_0)}{\|x\|} \leq \delta_0 \varepsilon,$$
then the stability event
$$\Sigma:=\left\{\frac{\mathrm{dist}(x,x_k)}{\|x\|} \leq \delta_0~~ \text{for all}~~ k\geq 1\right \}$$
holds with probability at least $1-\varepsilon^2$, and conditioned on this event the expected squared error decays exponentially. More precisely, we have
$$\E \left[\mathrm{dist}^2(x,x_k) \One_\Sigma \right] \leq e^{-k/4d} \mathrm{dist}^2(x,x_0)$$
for all $k\geq 1$.
\end{theorem}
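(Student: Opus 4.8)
The plan is to factor the argument into three parts: a deterministic one‑step contraction estimate, a measure‑concentration argument showing that a random $\Phi$ satisfies the hypotheses of that estimate, and a martingale (drift and hitting‑time) argument that upgrades the one‑step estimate to the iterated statement.

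\emph{Step 1 (the deterministic core).} Fix $x$ and, replacing $x$ by $-x$ if necessary, assume $x_0$ is the closer of $\pm x$ to the current iterate; write $e_k := x_k - x$. Unfolding one step of \eqref{Kaczmarz-phase-normalized} and splitting according to whether the sign $\sigma(\inner{x_k}{\phi_t})$ agrees with $\sigma(\inner{x}{\phi_t})$, one obtains the exact identity
$$\|x_{k+1} - x\|^2 \;=\; \|e_k\|^2 \;-\; (\inner{e_k}{\phi_t})^2 \;+\; 4\,(\inner{x}{\phi_t})^2\,\One_{E_t},$$
where $E_t$ is the sign‑mismatch event, on which necessarily $|\inner{x}{\phi_t}| \le |\inner{e_k}{\phi_t}|$. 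Averaging over the uniform choice of $t$, the middle term calls for a lower frame bound and the last for a ``restricted‑energy'' bound along the directions where a mismatch is possible. Accordingly I would define $\Phi$ to be $\delta$‑admissible as the conjunction of: (a) a lower frame bound $\frac1m\sum_t (\inner{u}{\phi_t})^2 \ge \frac{1-\delta}{d}\|u\|^2$ for all $u$; (b) a matching upper frame bound; (c) a uniform restricted‑energy bound $\sup_{0<\|u\|\le \delta\|x\|}\ \frac1m\sum_t (\inner{u}{\phi_t})^2\,\One_{|\inner{u}{\phi_t}|\ge |\inner{x}{\phi_t}|} \le \frac{C_1\delta}{d}\|u\|^2$; and (d) an auxiliary bound (on a fourth moment, or on the fraction of indices with $|\inner{x}{\phi_t}|$ atypically small) used to tidy up the estimate. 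From the identity, the crude pointwise bound $\|x_{k+1}-x\|^2 \le 5\|e_k\|^2$ together with the triangle inequality for $\|x\pm x_{k+1}\|$ shows that, while the relative error stays below a small absolute $\delta_0$, the closer representative never changes, so $\mathrm{dist}(x,x_{k+1}) = \|x_{k+1}-x\|$; and taking conditional expectations, if $\|e_k\| \le \delta_0\|x\|$ and $\delta_0$ is small enough,
$$\E\big[\mathrm{dist}^2(x,x_{k+1}) \,\big|\, \mathcal F_k\big] \;\le\; \Big(1 - \tfrac{1}{4d}\Big)\,\mathrm{dist}^2(x,x_k)\;\le\; e^{-1/4d}\,\mathrm{dist}^2(x,x_k).$$
This is the content promised for Section~\ref{sec-admissibility}.

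\emph{Step 2 (a random $\Phi$ is $\delta_0$‑admissible).} For $\phi_i$ uniform on $\uS^{d-1}$ (or standard Gaussian), $\E\,(\inner{u}{\phi_i})^2 = \|u\|^2/d$ and $(\inner{u}{\phi_i})^2$ is sub‑exponential at the scale $\|u\|^2/d$; so (a) and (b) follow from Bernstein's inequality at a fixed $u$ together with an $\epsilon$‑net of $\uS^{d-1}$, with failure probability $\exp(-cm)$ as soon as $m \ge C d$ — the standard restricted‑isometry argument. For (c), homogeneity reduces the supremum to unit $u$ and to the boundary case $\|u\| = \delta\|x\|$, i.e.\ to bounding $\frac1m\sum_t (\inner{v}{\phi_t})^2\,\One_{|\inner{\hat x}{\phi_t}| \le \delta|\inner{v}{\phi_t}|}$ over $\|v\|=1$, where $\hat x = x/\|x\|$. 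At a fixed $v$ the summand has mean $O(\delta/d)$ — the indicator is ``on'' with probability $O(\delta)$ because the marginal density of $\inner{\hat x}{\phi_i}$ is bounded near $0$ by $O(\sqrt d)$ — and again concentrates at that scale by Bernstein. The passage to the supremum is the delicate point: the summand is not Lipschitz in $v$, so one discretizes and \emph{relaxes} the indicator, replacing $\One_{|\inner{\hat x}{\phi_t}|\le \delta|\inner{v}{\phi_t}|}$ for $v$ near a net point $v'$ by the slightly larger $\One_{|\inner{\hat x}{\phi_t}|\le \delta|\inner{v'}{\phi_t}| + \eta}$, controlling the error by the net radius $\eta$; the relaxed quantity still has mean $O(\delta/d)$ and concentrates, and a union bound over the net gives (c) with failure probability $\exp(-cm)$ once $m\ge Cd$. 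Property (d) is of the same flavour. Intersecting the four events yields a $\delta_0$‑admissible $\Phi$ with probability $1-\exp(-cm)$.

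\emph{Step 3 (drift and hitting‑time analysis).} Fix a $\delta_0$‑admissible $\Phi$, set $d_k := \mathrm{dist}(x,x_k)$ and $\rho := e^{-1/4d}$, and let $\mathcal F_k$ be the natural filtration generated by the first $k$ index choices. Let $\tau := \inf\{k\ge 1 : d_k > \delta_0\|x\|\}$, a stopping time; since $d_0 \le \delta_0\varepsilon\|x\| < \delta_0\|x\|$ we have $\tau \ge 1$. On $\{\tau > k\}$ the one‑step estimate of Step~1 applies, so $N_k := d_{k\wedge\tau}^2$ and $M_k := \rho^{-(k\wedge\tau)} d_{k\wedge\tau}^2$ are nonnegative $(\mathcal F_k)$‑supermartingales with $N_0 = M_0 = d_0^2$. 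Since $\Sigma = \{\tau = \infty\}$ and on $\{\tau<\infty\}$ one has $\sup_k N_k \ge d_\tau^2 > \delta_0^2\|x\|^2$, Doob's maximal inequality for nonnegative supermartingales gives
$$\PP(\Sigma^c) \;=\; \PP(\tau < \infty) \;\le\; \PP\!\Big(\sup_k N_k \ge \delta_0^2\|x\|^2\Big) \;\le\; \frac{d_0^2}{\delta_0^2\|x\|^2} \;\le\; \varepsilon^2 .$$
For the conditional decay, on $\Sigma$ we have $k\wedge\tau = k$, hence $d_k^2\,\One_\Sigma = \rho^k M_k\,\One_\Sigma \le \rho^k M_k$, so
$$\E\big[\mathrm{dist}^2(x,x_k)\,\One_\Sigma\big] \;=\; \rho^k\,\E[M_k\,\One_\Sigma] \;\le\; \rho^k\,\E[M_k] \;\le\; \rho^k d_0^2 \;=\; e^{-k/4d}\,\mathrm{dist}^2(x,x_0).$$
Combining with the event of Step~2 (that $\Phi$ is $\delta_0$‑admissible, of probability $1-\exp(-cm)$) proves the theorem.

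\emph{Main obstacle.} The routine ingredients are the martingale bookkeeping of Step~3 and the frame bounds (a), (b). The real work is property (c): a uniform‑over‑the‑ball bound on the restricted‑energy sum at the sharp scale $\delta/d$, which needs both the anti‑concentration input (extracting the extra factor $\delta$ from the near‑vanishing of $\inner{x}{\phi_i}$, uniformly in the unit direction) and the indicator‑relaxation/net device to cope with the non‑Lipschitz summand — and then arranging the constants, and the choice of $\delta_0$, so that the net one‑step gain is at least $1/4d$.
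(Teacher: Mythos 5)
Your overall architecture matches the paper's exactly: a deterministic ``$\delta$-admissibility'' condition on $\Phi$ yielding a one-step conditional contraction, a concentration argument showing a random $\Phi$ is admissible, and a drift/hitting-time analysis. Your Step 1 identity and the observation that a sign mismatch forces $|\inner{x}{\phi_t}|\le|\inner{(x-x_k)}{\phi_t}|$ are the paper's Section 2. Your Step 3 is correct and in fact cleaner than the paper's: where the paper derives $\PP(\tau_b<\infty)\le\rho(\|z_0\|/b)^2$ and the conditional decay by an explicit telescoping summation over $k$ (Lemmas \ref{full-contraction}--\ref{bound-escape-prob}), your stopped supermartingales $N_k$ and $M_k$ together with Doob's maximal inequality give the same two conclusions in a few lines, and your handling of the $\pm x$ ambiguity via ``the closer representative cannot change while the relative error is small'' is sound.

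The gap is in Step 2, property (c), which you correctly identify as the crux but do not actually close in the claimed regime $m\asymp d$. Normalize $\|u\|=\delta\|x\|$ and $v=u/\|u\|$, so (c) asks for $\frac1m\sum_t(\inner{v}{\phi_t})^2\,\One_{\{|\inner{\hat x}{\phi_t}|\le\delta|\inner{v}{\phi_t}|\}}\le C_1\delta/d$ uniformly over $\|v\|=1$. In your net-plus-relaxation scheme, passing from a net point $v'$ to a nearby $v$ with $\|v-v'\|\le\eta$ widens the indicator to the band $|\inner{\hat x}{\phi_t}|\le\delta|\inner{v'}{\phi_t}|+\delta\eta$. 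The expected extra contribution is governed by the probability that $\inner{\hat x}{\phi}$ lands in a band of width $\delta\eta$, which is $\asymp\delta\eta\sqrt d$ (the density near $0$ is $\asymp\sqrt d$, as you note), each such flip adding $(\inner{v'}{\phi_t})^2\asymp 1/d$; the resulting approximation error is $\asymp\delta\eta/\sqrt d$, and the cross term $\frac1m\sum 2\eta|\inner{v'}{\phi_t}|$ contributes another $\asymp\eta/\sqrt d$. Against the target scale $\delta/d$ this forces $\eta\lesssim 1/\sqrt d$, hence a net of cardinality $(c\sqrt d/\delta)^d$ and, after the union bound, $m\gtrsim d\log d$ rather than $m\gtrsim d$. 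So your argument as written proves the theorem only with $m\ge Cd\log d$. (The statement (c) itself is plausibly true for $m\ge C(\delta)d$, but note the warning sign the paper flags in a footnote: the closely related untruncated bound $\E_{\phi\sim\Phi}|\inner{z}{\phi}|^4\lesssim\|z\|^4/d^2$ is provably \emph{false} for $m\asymp d$, so any direction-coupled uniform bound at a scale below $1/d$ is delicate.)

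The paper avoids ever proving such a bound. It decouples the mismatch indicator from the energy by Cauchy--Schwarz, controls the mismatch frequency uniformly by importing the $\delta$-uniform tessellation theorem of Plan--Vershynin \cite{PV} as a black box (this is where the hard non-Lipschitz uniform-over-the-sphere statement lives, and \cite{PV} handles it by a specialized argument, not a naive net), and replaces the fourth moment by \emph{truncated} second and fourth moments \eqref{4-truncate-upper-bound}--\eqref{2-truncate-upper-bound}. The latter are amenable to the net argument precisely because their Lipschitz surrogates $\gamma_1,\gamma_2$ are $O(1)$-Lipschitz functions of the rescaled variable $|\inner{z}{(\sqrt d\phi)}|^2$ and the targets live at scale $O(1)$ after rescaling, so a net of \emph{constant} radius $\epsilon=\delta/12$ suffices and $m\asymp d$ is enough. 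To repair your Step 2 you would need either this decoupling-plus-truncation route or a genuinely stronger uniformity tool (e.g.\ chaining in the style of the proof of the tessellation theorem) in place of the indicator-relaxation device.
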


We prove this theorem at the end of Section \ref{sec-prob-analysis}. Some remarks are in order:

\begin{itemize}
\item As is the case for the randomized Kaczmarz method for linear inverse problems, the exponential convergence of $x_k$ to $x$ is achieved in the mean-squared sense. However, an important distinction is that this is conditional on a stability event. (In the linear case, this event is automatic due to the fact that error decreases deterministically.) We handle this problem using methods that are known as ``drift analysis'' (see \cite{HB}).

\item The above stated probability lower bound for the stability event is not tight. Furthermore, our preliminary calculations suggest that the methods of this paper can be extended to achieve an improved probabilistic guarantee of the form $1-O(\varepsilon^{2p})$ for any {\em fixed} $p \geq 1$. For the sake of exposition we do not pursue this extension in this manuscript.

\item We have left out performance guarantees regarding the initialization procedure from the above theorem because we have no new results to offer here. One may simply use the truncated spectral method \cite{CC} which is capable of providing the kind of guarantee that is compatible with the above theorem in that for any accuracy guarantee it can operate in the regime $m \asymp d$ and succeed with probability $1-\exp(-\Omega(m))$. 

\end{itemize}

{\bf Note for the revision:} We would like to note here that simultaneously with the initial posting of this paper, Y.~Shuo Tan and R.~Vershynin posted a manuscript (see \cite{Shuo_etal}) on the randomized Kaczmarz method for phase retrieval, with results that are somewhat similar to ours, but established using different methods. Subsequently, we were also informed that Zhang et al.~\cite{Zhang_etal} had previously established a conditional error contractivity result for the Gaussian measurement model and using the so-called ``reshaped Wirtinger flow'' method.  

\section{Basic relations}

Let $z_k:=x-x_k$. Then \eqref{Kaczmarz-phase-normalized} can be rewritten as 
\begin{eqnarray}
z_{k+1} &=& z_k - \left( \sigma(\inner{x_k}{\phi_{t}})|\inner{x}{\phi_{t}}| - \inner{x_k}{\phi_{t}} \right) \phi_{t} \nonumber \\
& = & 
\label{error-iteration}
z_k - (\inner{z_k}{\phi_{t}})\phi_{t} + \left[\sigma(\inner{x}{\phi_t})- \sigma(\inner{x_k}{\phi_{t}}) \right]|\inner{x}{\phi_{t}}|\phi_{t}. 
\end{eqnarray}
Since $z_k - (\inner{z_k}{\phi_{t}})\phi_{t}$ and $\phi_t$ are orthogonal, we obtain
\begin{eqnarray}
 \|z_{k+1}\|^2 & = & \|z_k - (\inner{z_k}{\phi_{t}})\phi_{t}\|^2 + \left|\sigma(\inner{x}{\phi_t})- \sigma(\inner{x_k}{\phi_{t}}) \right|^2|\inner{x}{\phi_{t}}|^2 \nonumber \\
 & = & 
 \label{error-iteration-norm}
 \|z_k\|^2 - |\inner{z_k}{\phi_{t}}|^2 + \left|\sigma(\inner{x}{\phi_t})- \sigma(\inner{x_k}{\phi_{t}}) \right|^2|\inner{x}{\phi_{t}}|^2. 
\end{eqnarray}
When $\inner{x}{\phi_t}$ and $\inner{x_k}{\phi_{t}}$ have opposite signs we have
$|\inner{x}{\phi_t}| \leq |\inner{(x-x_k)}{\phi_t}|$ so that 
$$ \left|\sigma(\inner{x}{\phi_t})- \sigma(\inner{x_k}{\phi_{t}}) \right|
|\inner{x}{\phi_t}| \leq 
\left|\sigma(\inner{x}{\phi_t})- \sigma(\inner{x_k}{\phi_{t}}) \right|
|\inner{z_k}{\phi_t}|$$
is always valid. Hence \eqref{error-iteration-norm} implies
\begin{equation} \label{error-iteration-norm-bound}
 \|z_{k+1}\|^2 \leq \|z_k\|^2 + \left[\left|\sigma(\inner{x}{\phi_t})- \sigma(\inner{x_k}{\phi_{t}}) \right|^2-1\right]|\inner{z_k}{\phi_{t}}|^2. 
\end{equation}

Note that \eqref{Kaczmarz-phase-normalized} is invariant under the transformation $x \mapsto -x$. Hence we actually have
\begin{equation}
 \|\pm x - x_{k+1}\|^2 \leq \|\pm x-x_k\|^2 + \left[\left|\sigma(\inner{\pm x}{\phi_t})- \sigma(\inner{x_k}{\phi_{t}}) \right|^2-1\right]|\inner{(\pm x-x_k)}{\phi_{t}}|^2,
\end{equation}
i.e. the analysis is identical for $x$ and $-x$.
For convenience of notation and without loss of generality we will work to analyze $\|x-x_k\|$ and make our initial condition assumption on $\|x-x_0\|$. 

\subsection{Heuristic for convergence}\label{heuristic}
Let $\phi$ be uniformly distributed on $\uS^{d-1}$.
It is a standard fact that 
$$ \E ~|\inner{z}{\phi}|^2 = \frac{1}{d}\|z\|^2,$$
and an easy calculation (see Appendix) yields
$$ \E ~|\inner{z}{\phi}|^4 = \frac{3}{d(d+2)}\|z\|^4.$$
It can also be checked easily that for any two nonzero $x$ and $y$ we have
$$ \PP\{\sigma(\inner{x}{\phi}) \not= \sigma(\inner{y}{\phi})\} =
\frac{1}{\pi}\theta_{x,y} =: d(\hat x,\hat y),$$
where $\theta_{x,y}\in[0,\pi]$ is the angle between $x$ and $y$, and therefore
$d(\hat x,\hat y)$ is the normalized geodesic distance on $\uS^{d-1}$ between $\hat x$ and $\hat y$. Hence, by Cauchy-Schwarz inequality, we obtain 
\begin{eqnarray}
\E \left|\sigma(\inner{x}{\phi})- \sigma(\inner{y}{\phi}) \right|^2|\inner{(x-y)}{\phi}|^2
& \leq & 4 \left (\PP\{\sigma(\inner{x}{\phi}) \not= \sigma(\inner{y}{\phi})\}\right)^{1/2} \left(\E |\inner{(x-y)}{\phi}|^4\right)^{1/2} \nonumber \\
& \leq & \frac{4}{d} \left(\frac{3\theta_{x,y}}{\pi}\right)^{1/2} \|x - y\|^2.
\end{eqnarray}
Hence, if $\theta_{x,y}$ is sufficiently small (e.g., less than $1/64$), then 
$$ \E \left(\left|\sigma(\inner{x}{\phi})- \sigma(\inner{y}{\phi}) \right|^2-1\right)|\inner{(x-y)}{\phi}|^2
\leq -\frac{1}{2d} \|x - y\|^2.$$

Guided by these calculations, we turn to the error bound \eqref{error-iteration-norm-bound}. We see that if $\theta_{x,x_k}$ is sufficiently small (which, for a fixed $x$, would be guaranteed by a sufficiently small $z_k$) and if we were to choose each $\phi_t$ uniformly and independently on the unit sphere, then we would have 
$$ \E \left [\|z_{k+1}\|^2 \Big | \sF_k,\theta_{x,x_k} < \frac{1}{64}\right] \leq \left(1-\frac{1}{2d}\right) \|z_k\|^2,$$
where $\sF_k$ is the sigma-algebra generated by $\phi_{t(0)},\dots,\phi_{t(k{-}1)}$, and for any event $E$, $\{\sF_k,E\}$ is the sigma-algebra in $E$ formed by intersecting elements of $\sF_k$ with $E$. 

Hence the stochastic process $(\|z_k\|^2)_{k=0}^\infty$ is contractive in conditional expectation which is also conditional on the size of $\|z_k\|$. Without the size condition on $\|z_k\|$, the analysis would have been fairly straightforward, similar to the situation of the randomized Kaczmarz iteration for linear systems. As we will see, this condition makes the task non-trivial.

However, we must also establish a similar contractivity result (conditional and in expectation) for the actual random model used in this paper, i.e., when $\phi_t$ is chosen uniformly from a fixed collection $\Phi:=(\phi_1,\dots,\phi_m)$. This collection itself may also have been chosen randomly, though with the above observation we can now define certain deterministic properties of $\Phi$ that are needed for the algorithm to work.

\section{Admissible measurement systems} \label{sec-admissibility}

Let $\delta \in (0,1)$ and $\Phi:=(\phi_1,\dots,\phi_m)$ be a given collection of nonzero vectors in $\R^d$.
Following \cite{PV}, we say that $\Phi$, or more appropriately, the linear hyperspaces $(\phi_1^\perp,\dots,\phi_m^\perp)$ produce a $\delta$-uniform tessellation of $\uS^{d-1}$ if for all $x$ and $y$ in $\uS^{d-1}$, we have
\begin{equation}\label{delta-uniform}
\left| \frac{1}{m}\text{card}\big \{1\leq i \leq m : \sigma(\inner{x}{\phi_i}) \not= \sigma(\inner{y}{\phi_i}) \big \}
 - d(x,y) \right | < \delta.
\end{equation}
Then by Theorem 1.2 of \cite{PV}, there exists two positive absolute constants $C$ and $c$ such that if $m \geq C \delta^{-6}d$ and the $\phi_i$ are chosen independently from the uniform distribution on $\uS^{d-1}$, then with probability at least $1-2\exp(-c\delta^2 m)$, we get a $\delta$-uniform tessellation of $\uS^{d-1}$.

If $\phi$ is chosen from the collection $\Phi:=(\phi_1,\dots,\phi_m)$ uniformly at random and $f$ is any function on $\Phi$, then we define the empirical mean
$$ \E_{\phi\sim \Phi} ~ f(\phi) := \frac{1}{m} \sum_{i=1}^m f(\phi_i). $$
With a $\Phi$ that yields a $\delta$-uniform tessellation, we have that the empirical mean $\E_{\phi\sim \Phi} {\bf 1}_{\{\sigma(\inner{x}{\phi}) \not= \sigma(\inner{y}{\phi})\}}$ is within $\delta$ of the ensemble mean $d(\hat x,\hat y)$. The upper part of this bound obviously yields
\begin{equation}\label{delta-uniform-implies}
 \E_{\phi\sim\Phi} ~ \One_{\left \{\sigma(\inner{x}{\phi})\not= \sigma(\inner{y}{\phi}) \right \}} \leq \delta + d(\hat x,\hat y)~~~\mbox{ for all } x,y.  
\end{equation}

The above result provides a pathway for mimicking the argument in Section \ref{heuristic} with $\E$ replaced by $\E_{\phi\sim \Phi}$.
Under the same random model for $\Phi$, a useful concentration result (i.e. for the regime $m \asymp d$) holds for the empirical mean $\E_{\phi\sim \Phi} |\inner{z}{\phi}|^2$. Indeed as it follows from \cite[Theorem 5.39]{Vershynin}, there exist absolute positive constants $C$ and $c$ such that for $m \geq Cd$ and with probability $1-\exp(-cm)$ we have
\begin{equation}\label{2-lower-upper-bound}
\frac{\|z\|^2}{2d} \leq \E_{\phi\sim \Phi} ~|\inner{z}{\phi}|^2 \leq \frac{3\|z\|^2}{2d} ~~~\mbox{ for all } z.
\end{equation}
(If desired, the constants $1/2$ and $3/2$ can be chosen closer to $1$ without changing the form of this statement.)

In order to continue on the same path, one would wish to have $\E_{\phi\sim \Phi} ~|\inner{z}{\phi}|^4 \lesssim \|z\|^4/d^2$ with high probability. As it turns out,\footnote{We thank Y. Shuo Tan and R. Vershynin for bringing this fact to our attention.} this is impossible in the regime $m \asymp d$.
We will circumvent this obstacle by tightening the Cauchy-Schwarz argument of Section \ref{heuristic}: In order to do this, we will invoke \eqref{delta-uniform-implies} coupled with the Cauchy-Schwarz inequality only in the event $|\inner{z}{\phi}|^2$ does not exceed a fixed multiple of its mean value $\|z\|^2/d$, and show that the above desirable upper bound is then achievable with high probability. At the same time, we will show that the second moment contribution from the large values is in fact small, so in this event we will only invoke the trivial bound on $\left|\sigma(\inner{x}{\phi})- \sigma(\inner{y}{\phi}) \right|^2$. 

To this end, given $\delta \in (0,1)$, consider the alternative weaker conditions
\begin{equation}\label{4-truncate-upper-bound}
\E_{\phi\sim \Phi} ~|\inner{z}{\phi}|^4\, \One_{\left \{|\inner{z}{\phi}|^2\leq \|z\|^2/\delta d\right \}} \leq \frac{4\|z\|^4}{d^2} ~~~\mbox{ for all } z,
\end{equation}
and  
\begin{equation}\label{2-truncate-upper-bound}
\E_{\phi\sim \Phi} ~|\inner{z}{\phi}|^2 \,\One_{\left \{|\inner{z}{\phi}|^2> \|z\|^2/\delta d\right \}} \leq \frac{4\delta \|z\|^2}{d} ~~~\mbox{ for all } z.
\end{equation}
We will say that $\Phi$ is {\em $\delta$-admissible} if all of the four conditions \eqref{delta-uniform-implies}, \eqref{2-lower-upper-bound}, \eqref{4-truncate-upper-bound}, and \eqref{2-truncate-upper-bound} hold. Note that all of these are deterministic conditions on $\Phi$. We will show in Lemma \ref{admissible-whp} that a random measurement system $\Phi$ is $\delta$-admissible with high probability when $m \geq C d$, but first let us show how these two alternative conditions are used instead of a bound on $\E_{\phi\sim \Phi} ~|\inner{z}{\phi}|^4$. Suppose $\Phi$ is $\delta$-admissible. 
Noting that $\left|\sigma(\inner{x}{\phi})- \sigma(\inner{y}{\phi}) \right|^2 /4 = \One_{\left \{\sigma(\inner{x}{\phi})\not= \sigma(\inner{y}{\phi}) \right \}}$, we have
\begin{align}
& \E_{\phi\sim \Phi}  ~\left|\sigma(\inner{x}{\phi})- \sigma(\inner{y}{\phi}) \right|^2|\inner{(x-y)}{\phi}|^2
\nonumber \\
& \qquad \qquad \qquad \qquad = ~ 4 ~ \E_{\phi\sim\Phi} ~ \One_{\left \{\sigma(\inner{x}{\phi})\not= \sigma(\inner{y}{\phi}) \right \}} 
~|\inner{(x-y)}{\phi}|^2\,\One_{\left \{|\inner{(x-y)}{\phi}|^2\leq \|x-y\|^2/\delta d\right \}}  \nonumber \\
& \qquad \qquad \qquad \qquad \qquad \qquad + 4 ~ \E_{\phi\sim\Phi} ~ \One_{\left \{\sigma(\inner{x}{\phi})\not= \sigma(\inner{y}{\phi}) \right \}} 
~|\inner{(x-y)}{\phi}|^2\,\One_{\left \{|\inner{(x-y)}{\phi}|^2> \|x-y\|^2/\delta d\right \}} 
\nonumber \\
& \qquad \qquad \qquad \qquad \leq ~ 4 \left(\delta + d(\hat x,\hat y)\right)^{1/2}
\Big(\E_{\phi\sim\Phi} ~|\inner{(x-y)}{\phi}|^4\,\One_{\left \{|\inner{(x-y)}{\phi}|^2\leq \|x-y\|^2/\delta d\right \}}\Big)^{1/2}  \nonumber \\
& \qquad \qquad \qquad \qquad \qquad \qquad + 4 ~ \E_{\phi\sim\Phi} 
~|\inner{(x-y)}{\phi}|^2\,\One_{\left \{|\inner{(x-y)}{\phi}|^2> \|x-y\|^2/\delta d\right \}}. 
\end{align}
Invoking \eqref{4-truncate-upper-bound} and \eqref{2-truncate-upper-bound}, we get that
\begin{eqnarray}
 \E_{\phi\sim \Phi}  ~\left|\sigma(\inner{x}{\phi})- \sigma(\inner{y}{\phi}) \right|^2|\inner{(x-y)}{\phi}|^2
 & \leq & 
 \left(8 (\delta + d(\hat x,\hat y))^{1/2} + 16 \delta \right) \frac{\|x-y\|^2}{d} \nonumber \\
 & \leq & \frac{1}{4d} \|x - y\|^2,
\end{eqnarray}
provided $d(\hat x, \hat y)\leq \delta$ and $\delta$ is sufficiently small (e.g. $8\sqrt{2\delta} + 16\delta \leq 1/4$). Hence, together with the lower bound of \eqref{2-lower-upper-bound}, we have 
$$ \E_{\phi\sim \Phi} ~\left(\left|\sigma(\inner{x}{\phi})- \sigma(\inner{y}{\phi}) \right|^2-1\right)|\inner{(x-y)}{\phi}|^2
\leq -\frac{1}{4d} \|x - y\|^2$$
and therefore
$$ \E \left [\|z_{k+1}\|^2 \Big | \sF_k,d(\hat x,\hat x_k) \leq \delta\right] \leq \Big(1-\frac{1}{4d}\Big) \|z_k\|^2,$$
where again $\sF_k$ is the sigma-algebra generated by $\phi_{t(0)},\dots,\phi_{t(k{-}1)}$.

At this point, it will be helpful to replace the condition $d(\hat x,\hat x_k) \leq \delta$ by a size condition on $z_k$. Note that for any two nonzero vectors $x$ and $y$, we have 
$$
d(\hat x, \hat y) \leq \frac{1}{2} \|\hat x - \hat y\| = 
\frac{1}{2} \left \| \frac{x}{\|x\|} - \frac{y}{\|y\|} \right \|
= \frac{\big \| (x - y)\|y\| + y(\|y\|-\|x\|) \big \|}{2\|x\| \|y\|} \leq \frac{\|x-y\|}{\|x\|}$$
so that the condition $\|z_k\|\leq \delta \|x\|$ implies $d(\hat x, \hat x_k) \leq \delta$. Therefore we have
\begin{equation}
\E \left [\|z_{k+1}\|^2 \Big | \sF_k,\|z_k\| \leq \delta \|x\| \right] \leq  \Big(1-\frac{1}{4d}\Big) \|z_k\|^2.
\end{equation}

With the above discussion we have established the following result:
\begin{lemma}\label{admissible-implies-conditional-contraction}
There exists $\delta_0>0$ such that, if $\delta\leq \delta_0$ and $\Phi$ is $\delta$-admissible, then 
\begin{equation}\label{conditional-contraction}
\E \left [\|z_{k+1}\|^2 \Big | \sF_k,\|z_k\| \leq b \right] \leq  \rho \|z_k\|^2~~~\mbox{ for all } k\geq 0,
\end{equation}
where $b:=\delta \|x\|$ and  $\rho := 1-\frac{1}{4d}$.
\end{lemma}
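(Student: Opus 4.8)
The starting point is the deterministic pointwise bound \eqref{error-iteration-norm-bound}. I would take the conditional expectation of both sides over the fresh index $t=t(k)$, which is uniform on $\{1,\dots,m\}$ and independent of $\sF_k$. Since $x_k$ (hence $z_k$) is $\sF_k$-measurable and the event $\{\|z_k\|\le b\}$ is $\sF_k$-measurable, this conditional expectation is exactly the empirical average $\E_{\phi\sim\Phi}$ evaluated with $x$ and $y:=x_k$ held fixed, i.e. with $x-y=z_k$. (Note $\|z_k\|\le b=\delta\|x\|<\|x\|$ forces $x_k\ne 0$, so $\hat x_k$ makes sense, and the elementary estimate $d(\hat x,\hat y)\le\|x-y\|/\|x\|$ recalled before the lemma shows $\|z_k\|\le b$ implies $d(\hat x,\hat x_k)\le\delta$.) Thus it suffices to prove, for every pair of nonzero vectors $x,y$ with $d(\hat x,\hat y)\le\delta$,
$$\E_{\phi\sim\Phi}\bigl(|\sigma(\inner{x}{\phi})-\sigma(\inner{y}{\phi})|^2-1\bigr)|\inner{(x-y)}{\phi}|^2\le-\frac{1}{4d}\|x-y\|^2,$$
and then specialize to $y=x_k$ on $\{\|z_k\|\le b\}$ and add $\|z_k\|^2$ to both sides of \eqref{error-iteration-norm-bound}.

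For the positive contribution, carrying $|\sigma(\inner{x}{\phi})-\sigma(\inner{y}{\phi})|^2=4\,\One_{\{\sigma\ne\sigma\}}$, the key move is to split the $\phi$-average according to whether $|\inner{(x-y)}{\phi}|^2$ lies at or below the truncation level $\|x-y\|^2/\delta d$ or above it; this truncation is the crucial device, since (as flagged in the text) no $O(\|z\|^4/d^2)$ bound on the full fourth empirical moment is available in the regime $m\asymp d$. On the low range I would apply Cauchy--Schwarz and then invoke the tessellation bound \eqref{delta-uniform-implies}, which gives $\E_{\phi\sim\Phi}\One_{\{\sigma\ne\sigma\}}\le\delta+d(\hat x,\hat y)$, together with the truncated fourth-moment bound \eqref{4-truncate-upper-bound}; this produces a contribution of at most $8(\delta+d(\hat x,\hat y))^{1/2}\|x-y\|^2/d$. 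On the high range I would simply drop $\One_{\{\sigma\ne\sigma\}}\le 1$ and use the truncated-tail second-moment bound \eqref{2-truncate-upper-bound}, for a contribution of at most $16\delta\|x-y\|^2/d$. Altogether the positive part is at most $\bigl(8(\delta+d(\hat x,\hat y))^{1/2}+16\delta\bigr)\|x-y\|^2/d$.

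For the negative contribution I would use the lower bound in the second-moment concentration estimate \eqref{2-lower-upper-bound}, namely $\E_{\phi\sim\Phi}|\inner{(x-y)}{\phi}|^2\ge\|x-y\|^2/2d$, so that this term is at most $-\|x-y\|^2/2d$. Under the hypothesis $d(\hat x,\hat y)\le\delta$ we have $\delta+d(\hat x,\hat y)\le 2\delta$, so if $\delta_0$ is chosen small enough that $8\sqrt{2\delta}+16\delta\le 1/4$ for all $\delta\le\delta_0$, the positive part is at most $\|x-y\|^2/4d$; combining with $-\|x-y\|^2/2d$ gives the displayed inequality with constant $-1/4d$. Taking $y=x_k$ on $\{\|z_k\|\le b\}$ and adding $\|z_k\|^2$ then yields $\E[\|z_{k+1}\|^2\mid\sF_k,\|z_k\|\le b]\le(1-1/4d)\|z_k\|^2=\rho\|z_k\|^2$. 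The only genuinely delicate points in the argument are the bookkeeping of the conditioning — checking that $t(k)$ is independent of $\sF_k$ so that the conditional expectation really collapses to $\E_{\phi\sim\Phi}$ with $x_k$ frozen — and the structural decision to truncate rather than bound the full empirical fourth moment; the rest is the Cauchy--Schwarz computation sketched above, plus a fixed choice of $\delta_0$.
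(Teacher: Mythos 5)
Your proposal is correct and follows essentially the same route as the paper: the same splitting of the empirical average at the truncation level $\|x-y\|^2/\delta d$, Cauchy--Schwarz with \eqref{delta-uniform-implies} and \eqref{4-truncate-upper-bound} on the low range, the trivial bound with \eqref{2-truncate-upper-bound} on the tail, the lower bound of \eqref{2-lower-upper-bound} for the negative term, the same smallness condition $8\sqrt{2\delta}+16\delta\le 1/4$, and the same reduction of $\|z_k\|\le b$ to $d(\hat x,\hat x_k)\le\delta$. No gaps.
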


We now show that a random $\Phi$ is $\delta$-admissible with high probability in the regime $m \asymp d$.

\begin{lemma}\label{admissible-whp}
For every $\delta \in (0,1)$, there exists positive constants $C$ and $c$ depending only on $\delta$ such that 
if $m\geq Cd$, then a random measurement system $\Phi:=(\phi_1,\dots,\phi_m)$ that is chosen independently from the uniform distribution on $\uS^{d-1}$ is $\delta$-admissible with probability at least $1-\exp(-cm)$.
\end{lemma}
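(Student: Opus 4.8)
The plan is to establish each of the four defining conditions of $\delta$-admissibility separately, each holding with probability at least $1-\exp(-c_i m)$ for a suitable $c_i$ depending only on $\delta$, and then take a union bound. Two of the four are already in hand: condition \eqref{delta-uniform-implies} follows from the $\delta$-uniform tessellation result of \cite{PV} (valid for $m \geq C\delta^{-6}d$, with failure probability at most $2\exp(-c\delta^2 m)$), and condition \eqref{2-lower-upper-bound} is the extreme-singular-value concentration bound from \cite[Theorem 5.39]{Vershynin} (valid for $m \geq Cd$, with failure probability at most $\exp(-cm)$; the constants $1/2$ and $3/2$ can be tuned). So the real work is conditions \eqref{4-truncate-upper-bound} and \eqref{2-truncate-upper-bound}, the two truncated-moment bounds.

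For both truncated bounds, the first observation is a homogeneity/reduction step: both inequalities are invariant under scaling of $z$, so it suffices to prove them for all $z \in \uS^{d-1}$, i.e. to control $\E_{\phi\sim\Phi}\, g(\inner{z}{\phi})$ uniformly over the sphere, where $g(s) = s^4 \One_{\{s^2 \leq 1/\delta d\}}$ in one case and $g(s) = s^2 \One_{\{s^2 > 1/\delta d\}}$ in the other. In each case $g$ is a \emph{bounded} function of $\inner{z}{\phi}$: in \eqref{4-truncate-upper-bound}, $g(\inner{z}{\phi}) \leq 1/(\delta d)^2$, and in \eqref{2-truncate-upper-bound}, $g(\inner{z}{\phi}) \leq \inner{z}{\phi}^2$ but the truncation can be absorbed by noting $s^2 \One_{\{s^2 > 1/\delta d\}} \leq (\delta d) s^4$, so it is controlled by a (scaled) version of the fourth moment which is again bounded once we also truncate. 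For a \emph{fixed} $z$, each empirical mean $\frac{1}{m}\sum_i g(\inner{z}{\phi_i})$ is an average of i.i.d. bounded random variables whose expectation is the corresponding ensemble quantity; by the heuristic computation in Section \ref{heuristic} (and its Appendix), $\E\, \inner{z}{\phi}^4 = \tfrac{3}{d(d+2)} \leq \tfrac{3}{d^2}$, and the tails of $\inner{z}{\phi}$ are subgaussian with the right scale $1/\sqrt d$ (a standard fact about the uniform measure on $\uS^{d-1}$), so $\E[\inner{z}{\phi}^2 \One_{\{\inner{z}{\phi}^2 > 1/\delta d\}}]$ is $O(\delta/d)$ or smaller for $\delta$ small; in particular the target constants $4/d^2$ and $4\delta/d$ comfortably dominate twice the true means. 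Bernstein's inequality for bounded variables then gives, for fixed $z$, a deviation bound of the form $\exp(-c'(\delta) m)$, where the boundedness constant $1/(\delta d)^2$ and the variance proxy $O(1/d^3)$ combine so that the relevant threshold (a constant multiple of $1/d^2$) sits many standard deviations above the mean.

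The main obstacle — and the only nontrivial point — is passing from a fixed $z$ to a uniform statement over all $z \in \uS^{d-1}$, since the sphere is not finite. The standard remedy is an $\epsilon$-net argument: fix a $\gamma$-net $\mathcal N$ of $\uS^{d-1}$ of cardinality at most $(1+2/\gamma)^d = e^{O(d)}$, apply the fixed-$z$ Bernstein bound together with a union bound over $\mathcal N$ (which costs a factor $e^{O(d)}$, absorbed into $\exp(-c(\delta)m)$ provided $m \geq C(\delta) d$ with $C(\delta)$ large enough), and then control the discretization error. The discretization step requires a Lipschitz-type estimate: one shows that $z \mapsto \E_{\phi\sim\Phi}\, |\inner{z}{\phi}|^4 \One_{\{\cdots\}}$ (and its degree-2 counterpart) does not change too much when $z$ moves by $\gamma$ on the sphere. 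Here one must be slightly careful because the truncation indicator is discontinuous; the clean way is to sandwich the truncated quantities between two \emph{smooth} truncations at slightly different levels (e.g. replace the sharp cutoff at $1/\delta d$ by Lipschitz bump functions supported on $[0, 1/\delta d]$ and on $[1/(2\delta) d, \infty)$ respectively), prove the net bound for the smooth versions where a genuine Lipschitz constant is available (the smooth $g$ has $\|g'\|_\infty = O(\sqrt d \cdot (\delta d)^{-3/2})$ after accounting for the chain rule through $\inner{z}{\phi}$, times $\|\phi\|=1$, and one also uses $\eqref{2-lower-upper-bound}$-type control of $\frac1m\sum |\inner{z}{\phi_i}|^2$ to bound the sum of these derivatives), and finally observe that the smooth versions with adjusted constants still imply the sharp-cutoff inequalities with the stated constants $4/d^2$ and $4\delta/d$, after possibly shrinking $\delta_0$. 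With all four conditions established on events of probability $1 - \exp(-c_i m)$, a union bound over $i = 1,\dots,4$ and the choice $c := \tfrac12 \min_i c_i$, $C := \max_i C_i$, completes the proof.
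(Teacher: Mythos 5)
Your overall architecture matches the paper's: conditions \eqref{delta-uniform-implies} and \eqref{2-lower-upper-bound} are quoted from \cite{PV} and \cite[Theorem 5.39]{Vershynin}, and the two truncated moment bounds are handled by a fixed-$z$ concentration inequality followed by an $\epsilon$-net argument in which the discontinuous indicator is replaced by a Lipschitz surrogate (your ``smooth sandwich'' plays the role of the paper's functions $\gamma_1,\gamma_2$, and your use of \eqref{2-lower-upper-bound} to control the discretization error is exactly what the paper does). For \eqref{4-truncate-upper-bound} your plan is essentially sound, though your variance proxy $O(1/d^3)$ is off: the second moment of $|\inner{z}{\phi}|^4\,\One_{\{|\inner{z}{\phi}|^2\le 1/\delta d\}}$ is $O(\delta^{-2}d^{-4})$, and if the variance really were $1/d^3$ the Bernstein exponent at deviation $\asymp 1/d^2$ would only be of order $m/d$, which cannot absorb the $e^{O(d)}$ union bound over the net; with the correct $O(\delta^{-2}d^{-4})$ the exponent becomes $c(\delta)m$ and the argument goes through.

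The genuine gap is in \eqref{2-truncate-upper-bound}. The random variable $g(\inner{z}{\phi}) := |\inner{z}{\phi}|^2\,\One_{\{|\inner{z}{\phi}|^2>1/\delta d\}}$ is not bounded at any useful scale: the truncation keeps precisely the large values, so the only almost-sure bound is $|\inner{z}{\phi}|^2\le 1$, which exceeds the target $4\delta/d$ by a factor $d/(4\delta)$. Your attempted absorption $s^2\,\One_{\{s^2>1/\delta d\}}\le \delta d\, s^4$ does not produce a bounded variable either --- the phrase ``bounded once we also truncate'' has no content here, since the complementary truncation $\{s^2\le 1/\delta d\}$ is exactly what this condition excludes. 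Quantitatively, a bounded-variable Bernstein inequality with $M=1$ and variance $\E\, g^2 \lesssim \delta/d^2$ yields, at deviation $u\asymp\delta/d$, an exponent of order $mu^2/(Mu)\asymp \delta m/d$, i.e.\ failure probability $\exp(-c\delta m/d)$, which is only a constant when $m\asymp d$ and certainly cannot beat the $(3/\epsilon)^d$ cardinality of the net. The paper avoids this by dominating $g$ by $\frac1d\gamma_2(|\inner{z}{(\sqrt d\,\phi)}|^2)$ with $\gamma_2(s)=\min(\delta s^2,s)$ and then applying the sub-exponential Bernstein inequality \cite[Proposition 5.16]{Vershynin}: since $\gamma_2(s)\le s$ and $|\inner{z}{(\sqrt d\,\phi)}|^2$ has $\psi_1$-norm $O(1)$, one gets concentration at deviation $t=\delta/2$ with failure probability $\exp(-c\delta^2 m)$, while $\gamma_2(s)\le\delta s^2$ keeps the mean at $3\delta$. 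You need to replace your bounded-Bernstein step for this condition with a sub-exponential (or equivalent tail-integration) argument; the rest of your net-and-Lipschitz machinery then applies unchanged.
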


\begin{proof}
The property \eqref{delta-uniform-implies} is proven in \cite{PV} and \eqref{2-lower-upper-bound} is covered by 
\cite[Theorem 5.39]{Vershynin}. Hence we only need to establish \eqref{4-truncate-upper-bound} and \eqref{2-truncate-upper-bound}. Note that by homogeneity we may assume $\|z\|=1$. 

We start with \eqref{4-truncate-upper-bound}. As is standard in this type of question, we would like to establish the stated inequality for fixed $z$ first (with high probability) and then use approximation over an $\epsilon$-net of $\uS^{d-1}$ to achieve uniformity over $z$.
However $\One_{\left \{|\inner{z}{\phi}|^2\leq \|z\|^2/\delta d\right \}}$ is a discontinuous function of the random variable $|\inner{z}{\phi}|$, presenting a difficulty for the approximation argument. The solution will follow by incorporating a suitable Lipschitz extension, as also done in \cite{CC}. 

For this purpose, let $\gamma_1:[0,\infty) \to [0,\infty)$ be defined by
\begin{equation}\label{gamma-1}
\gamma_1(s) := \left\{\begin{array}{ll}
s^2,& s \leq \delta^{-1},\\
(2 \delta^{-1} - s)\delta^{-1},& \delta^{-1} < s \leq 2\delta^{-1},\\
0,& 2\delta^{-1} < s.
\end{array}\right.
\end{equation}
Then $\gamma_1$ is a Lipschitz function with Lipschitz constant $2 \delta^{-1}$. Furthermore,
$$
s^2 \chi_{[0,\delta^{-1}]}(s) \leq \gamma_1(s) \leq s^2
$$
so that for any $z$ we have
\begin{eqnarray}\label{gamma_1-initial}
\E_{\phi\sim \Phi} ~|\inner{z}{\phi}|^4\, \One_{\left \{|\inner{z}{\phi}|^2\leq 1/\delta d\right \}} 
& = & \frac{1}{d^2} \,\E_{\phi\sim \Phi} ~|\inner{z}{(\sqrt{d} \phi)}|^4\, \chi_{[0,\delta^{-1}]}(|\inner{z}{(\sqrt{d} \phi)}|^2) \nonumber \\
& \leq & \frac{1}{d^2} \,\E_{\phi\sim \Phi} ~\gamma_1(|\inner{z}{(\sqrt{d} \phi)}|^2).
\end{eqnarray}

Now, let $\phi$ denote the random vector uniformly distributed on $\uS^{d-1}$ so that $\sqrt{d} \phi$ is a {\em spherical} random vector in $\R^d$ (see \cite[Section 5.2.5]{Vershynin}). Let
$\| \cdot \|_{\psi_1}$ stand for the sub-exponential norm and
$\| \cdot \|_{\psi_2}$ the sub-Gaussian norm (see \cite[Section 5.2.3 and 5.2.4]{Vershynin}).
Noting that $\gamma_1(s)\leq \delta^{-1}s$, we have
$$ \left \| \gamma_1(|\inner{z}{(\sqrt{d} \phi)}|^2) \right \|_{\psi_1}
\leq \delta^{-1} \left \| |\inner{z}{(\sqrt{d} \phi)}|^2 \right \|_{\psi_1}
\leq 2 \delta^{-1} \left \| |\inner{z}{(\sqrt{d} \phi)}| \right \|^2_{\psi_2}
\lesssim \delta^{-1}
$$
where in the second step we have used \cite[Lemma 5.14]{Vershynin}) and in the
last step the fact that the sub-Gaussian norm of a spherical random vector is bounded by an absolute constant (see \cite[Section 5.2.5]{Vershynin}; a direct computation is also possible).

Hence, by the Bernstein-type inequality \cite[Proposition 5.16]{Vershynin}, there is an absolute constant $c_1>0$ such that for any $t>0$ we have, with probability at least $1-\exp(-c_1\min(\delta t, \delta^2 t^2)m)$,
\begin{eqnarray}\label{gamma_1-concentration}
\E_{\phi\sim \Phi} ~\gamma_1(|\inner{z}{(\sqrt{d} \phi)}|^2)
& \leq &  \E ~\gamma_1(|\inner{z}{(\sqrt{d} \phi)}|^2) ~+~ t \nonumber \\
& \leq &  \E ~|\inner{z}{(\sqrt{d} \phi)}|^4 ~+~ t \nonumber \\
& \leq & 3 ~+~ t,
\end{eqnarray}
where in the second step we have used $\gamma_1(s) \leq s^2$ instead.

Now pick an $\epsilon$-net $\mathcal{N}$ of the unit sphere $\uS^{d-1}$ of cardinality at most $(3/\epsilon)^d$ where $\epsilon < 1$. For each $z' \in \uS^{d-1}$ and $z \in \mathcal{N}$ such that $\|z'-z\|<\epsilon$, we have
\begin{eqnarray}\label{gamma_1-Lipschitz}
\E_{\phi\sim \Phi} ~\left| \gamma_1(|\inner{z'}{(\sqrt{d} \phi)}|^2) - \gamma_1(|\inner{z}{(\sqrt{d} \phi)}|^2)\right |
& \leq &  2\delta^{-1} \E_{\phi\sim \Phi} ~ \left| |\inner{z'}{(\sqrt{d} \phi)}|^2 - |\inner{z}{(\sqrt{d} \phi)}|^2\right | \nonumber \\
& = & 2\delta^{-1} \E_{\phi\sim \Phi} ~ \left |[\inner{(z'-z)}{(\sqrt{d}\phi)}][\inner{(z'+z)}{(\sqrt{d}\phi)}]\right | \nonumber \\
& \leq & 6 \delta^{-1} \epsilon,
\end{eqnarray}
where in the first step we have utilized the Lipschitz continuity of $\gamma_1$, and in the last step Cauchy-Schwarz inequality coupled with the upper bound of \eqref{2-lower-upper-bound}.
Combining \eqref{gamma_1-initial}, \eqref{gamma_1-concentration}, and \eqref{gamma_1-Lipschitz}, we find that with probability at least $1-(3/\epsilon)^d\exp(-c_1 \min(\delta t, \delta^2 t^2)m)$, we have
$$
\E_{\phi\sim \Phi} ~|\inner{z'}{\phi}|^4\, \One_{\left \{|\inner{z'}{\phi}|^2\leq 1/\delta d\right \}} 
\leq \frac{1}{d^2} (3 + t + 6\delta^{-1}\epsilon)
$$
for every $z' \in \uS^{d-1}$. We may choose $t=1/2$ and $\epsilon = \delta/12$ so that 
$3 + t + 6\delta^{-1}\epsilon = 4$ and therefore \eqref{4-truncate-upper-bound} holds with probability at least $1-\exp(-c_1 \delta^2 m/8)$ provided $c_1 \delta^2 m/8 \geq d \log (36/\delta)$.

We continue with \eqref{2-truncate-upper-bound}. We will use the same method, but with a different Lipschitz function. Let $\gamma_2:[0,\infty) \to [0,\infty)$ be defined by
\begin{equation}\label{gamma-2}
\gamma_2(s) := \left\{\begin{array}{ll}
\delta s^2,& s \leq \delta^{-1}\\
s, & s > \delta^{-1}.\\
\end{array}\right.
\end{equation}
Then $\gamma_2$ is a Lipschitz function that fixes $0$ with Lipschitz constant $2$. We have
$$
s \chi_{(\delta^{-1},\infty)}(s) \leq \gamma_2(s) = \min(\delta s^2, s)
$$
so that for any fixed $z$ we have
\begin{eqnarray}
\E_{\phi\sim \Phi} ~|\inner{z}{\phi}|^2\, \One_{\left \{|\inner{z}{\phi}|^2> 1/\delta d\right \}} 
& = & \frac{1}{d} \,\E_{\phi\sim \Phi} ~|\inner{z}{(\sqrt{d} \phi)}|^2\, \chi_{(\delta^{-1},\infty)}(|\inner{z}{(\sqrt{d} \phi)}|^2) \nonumber \\
& \leq & \frac{1}{d} \,\E_{\phi\sim \Phi} ~\gamma_2(|\inner{z}{(\sqrt{d} \phi)}|^2).
\end{eqnarray}
Noting that $\gamma_2(s) \leq s$, we now have 
$$ \left \| \gamma_2(|\inner{z}{(\sqrt{d} \phi)}|^2) \right \|_{\psi_1} \lesssim 1$$
so that by the Bernstein-type inequality (and reducing the value of $c_1$ if necessary), for any $t\in (0,1)$ we have, with probability at least $1-\exp(-c_1 t^2 m)$,
\begin{eqnarray}
\E_{\phi\sim \Phi} ~\gamma_2(|\inner{z}{(\sqrt{d} \phi)}|^2)
& \leq &  \E ~\gamma_2(|\inner{z}{(\sqrt{d} \phi)}|^2) ~+~ t \nonumber \\
& \leq &  \delta\, \E ~|\inner{z}{(\sqrt{d} \phi)}|^4 ~+~ t \nonumber \\
& \leq & 3\delta ~+~ t,
\end{eqnarray}
where in the second step we have used $\gamma_2(s) \leq \delta s^2$ instead.
We again pick an $\epsilon$-net $\mathcal{N}$ of the unit sphere $\uS^{d-1}$ of cardinality at most $(3/\epsilon)^d$. For each $z' \in \uS^{d-1}$ and $z \in \mathcal{N}$ such that $\|z'-z\|<\epsilon$, this time we have
\begin{eqnarray}
\E_{\phi\sim \Phi} ~\left| \gamma_2(|\inner{z'}{(\sqrt{d} \phi)}|^2) - \gamma_2(|\inner{z}{(\sqrt{d} \phi)}|^2)\right |
& \leq &  2\,\E_{\phi\sim \Phi} ~ \left| |\inner{z'}{(\sqrt{d} \phi)}|^2 - |\inner{z}{(\sqrt{d} \phi)}|^2\right | \nonumber \\
& \leq & 6 \epsilon.
\end{eqnarray}
Hence by the union bound, with probability at least $1-(3/\epsilon)^d\exp(-c_1 t^2 m)$ we have 
$$
\E_{\phi\sim \Phi} ~|\inner{z'}{\phi}|^2\, \One_{\left \{|\inner{z'}{\phi}|^2> 1/\delta d\right \}} 
\leq \frac{1}{d} (3\delta + t + 6 \epsilon)
$$
for every $z' \in \uS^{d-1}$. We may choose $t=\delta/2$ and $\epsilon = \delta/12$ so that 
$3\delta + t + 6\epsilon = 4\delta$ and therefore \eqref{2-truncate-upper-bound} holds with probability at least  $1-\exp(-c_1 \delta^2 m/8)$ provided $c_1 \delta^2 m/8 \geq d \log (36/\delta)$.
\end{proof}

\section{Probabilistic analysis of the error sequence} \label{sec-prob-analysis}

Our goal in this section will be to bound the probability that $\|z_k\|$ exceeds $b$ at some point and to obtain probabilistic guarantees on the exponential decay of $\|z_k\|$. Lemma \ref{admissible-implies-conditional-contraction} uses the randomness present in the selection of $\phi_{t(k)}$ only. To be able to iterate this result recursively we need to condition on the event $\Omega_k:=\{\|z_0\|\leq b, \|z_1\|\leq b,\dots,\|z_k\|\leq b\}$.
We define the ``hitting time''
$$ \tau_b := \min\{ j \geq 0: \|z_j\| > b\}.$$
Hence $\Omega_k$ is the same as $\{\tau_b > k\}$ and the event $\{\tau_b = \infty\}$ means $\|z_k\|\leq b$ for all $k$.
\begin{lemma}\label{full-contraction}
Suppose \eqref{conditional-contraction} holds. Then 
\begin{equation}\label{recursive-contraction}
\E \left [\|z_{k+1}\|^2 \Big | \tau_b > k \right] \leq  \rho~ \E \left [\|z_{k}\|^2 \Big | \tau_b > k{-}1 \right]
\end{equation}
and therefore
\begin{equation}\label{recursive-contraction-outcome}
\E \left [\|z_{k}\|^2 \Big | \tau_b > k{-}1 \right] \leq  \rho^k \|z_0\|^2
\end{equation}
for all $k\geq 0$.
\end{lemma}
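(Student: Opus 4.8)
The plan is to establish the one-step bound \eqref{recursive-contraction} and then obtain \eqref{recursive-contraction-outcome} by iterating it down to $k=0$. Throughout, write $\Omega_k := \{\tau_b > k\} = \{\|z_0\|\le b,\dots,\|z_k\|\le b\}$ for $k\ge 0$ and, consistently with $\tau_b\ge 0$, set $\Omega_{-1}$ to be the whole sample space. Two structural facts will be used: (i) each $\|z_j\|$ is $\sF_j$-measurable, since $x_j$ is a deterministic function of the fixed initial point $x_0$ and of $\phi_{t(0)},\dots,\phi_{t(j-1)}$, and hence $\Omega_k\in\sF_k$; and (ii) $\Omega_k\subseteq\Omega_{k-1}$ and $\Omega_k\subseteq\{\|z_k\|\le b\}$ by definition. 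We may assume $\|z_0\|\le b$, so that $\Omega_0$ has full probability; for indices $k$ with $\PP(\Omega_k)=0$ the conditional statements below are vacuous, and throughout one may alternatively read $\E[\,\cdot\mid A\,]$ as the unnormalized $\E[\,\cdot\,\One_A]$.

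For \eqref{recursive-contraction} I would combine two observations. First, since $\One_{\Omega_k}$ is $\sF_k$-measurable and $\Omega_k\subseteq\{\|z_k\|\le b\}$, multiplying the conditional-contraction inequality \eqref{conditional-contraction} by $\One_{\Omega_k}$ and taking expectations (tower property) gives
\begin{equation*}
\E\big[\|z_{k+1}\|^2\,\One_{\Omega_k}\big]\;\le\;\rho\,\E\big[\|z_k\|^2\,\One_{\Omega_k}\big],
\qquad\text{equivalently}\qquad
\E\big[\|z_{k+1}\|^2\mid\Omega_k\big]\;\le\;\rho\,\E\big[\|z_k\|^2\mid\Omega_k\big].
\end{equation*}
Second, I claim $\E[\|z_k\|^2\mid\Omega_k]\le\E[\|z_k\|^2\mid\Omega_{k-1}]$. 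Indeed $\Omega_k=\Omega_{k-1}\cap\{\|z_k\|\le b\}$, so passing from the conditioning event $\Omega_{k-1}$ to $\Omega_k$ discards precisely the outcomes on which $\|z_k\|^2> b^2$; since the conditional mean of $\|z_k\|^2$ over the retained part $\Omega_k$ is at most $b^2$ while its conditional mean over the discarded part $\Omega_{k-1}\setminus\Omega_k$ is at least $b^2$, removing the discarded part cannot increase the overall conditional mean. Chaining the two displays yields \eqref{recursive-contraction}.

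It then only remains to iterate. Since \eqref{conditional-contraction} holds for every $k\ge 0$, so does \eqref{recursive-contraction}, and applying it successively for $k,k-1,\dots,1,0$ gives
\begin{equation*}
\E\big[\|z_k\|^2\mid\tau_b>k-1\big]\;\le\;\rho\,\E\big[\|z_{k-1}\|^2\mid\tau_b>k-2\big]\;\le\;\cdots\;\le\;\rho^{k}\,\E\big[\|z_0\|^2\mid\Omega_{-1}\big]\;=\;\rho^{k}\|z_0\|^2,
\end{equation*}
where the final equality uses that $\Omega_{-1}$ is the whole space and $z_0$ is deterministic. This is \eqref{recursive-contraction-outcome}.

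I expect the only genuinely non-routine point to be the second observation in the middle paragraph. The naive route — bounding $\E[\|z_{k+1}\|^2\One_{\Omega_k}]\le\rho\,\E[\|z_k\|^2\One_{\Omega_{k-1}}]$ and then dividing by $\PP(\Omega_k)$ — loses ground because $\PP(\Omega_k)\le\PP(\Omega_{k-1})$, so one must use that the event being conditioned away, $\{\|z_k\|>b\}$, is exactly where the atypically large values of $\|z_k\|^2$ sit; this is what forces the conditional mean of $\|z_k\|^2$ over $\Omega_k$ to be no larger than over $\Omega_{k-1}$. Everything else is measurability bookkeeping and a telescoping induction.
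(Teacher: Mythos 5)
Your proof is correct and follows essentially the same route as the paper: the tower-property step using that $\Omega_k\in\sF_k$ and $\Omega_k\subseteq\{\|z_k\|\le b\}$ is the paper's first display, and your ``discarding the part where $\|z_k\|^2>b^2$ cannot increase the conditional mean'' observation is exactly the paper's sandwich inequality $\E[\|z_k\|^2\mid\tau_b>k]\le b^2\le\E[\|z_k\|^2\mid\tau_b=k]$. The iteration to \eqref{recursive-contraction-outcome} is likewise identical.
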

\begin{proof}
Note that $\sF_0$ is the trivial sigma-algebra. We have
\begin{eqnarray}
 \E \left [\|z_{k+1}\|^2 \Big | \tau_b > k \right] 
 & = & \E \left [\|z_{k+1}\|^2 \Big | \tau_b > k, \sF_0 \right] \nonumber \\
 & = & \E \left [\E \left [\|z_{k+1}\|^2 \Big | \tau_b > k, \sF_k \right]\Big | \tau_b > k, \sF_0 \right] \nonumber \\
 & \leq & \rho~ \E \left [\E \left [\|z_{k}\|^2 \Big | \tau_b > k, \sF_k \right]\Big | \tau_b > k, \sF_0 \right] \nonumber \\
 & = & \rho~ \E \left [\|z_{k}\|^2 \Big | \tau_b > k \right]. \label{step-1}
\end{eqnarray}
Note also that 
\begin{equation}\label{sandwich}
\E \left [\|z_{k}\|^2 \Big | \tau_b  > k \right] \leq b^2 \leq \E \left [\|z_{k}\|^2 \Big | \tau_b = k \right].
\end{equation}
Hence we have
\begin{eqnarray}
 \E \left [\|z_{k}\|^2 \Big | \tau_b  > k \right] 
 & = & \PP\left(\|z_k\| \leq b \Big| \tau_b > k{-}1\right ) \E \left [\|z_{k}\|^2 \Big | \tau_b  > k \right] \nonumber \\
 & & \quad \quad + ~ \PP\left(\|z_k\| > b \Big| \tau_b > k{-}1\right ) \E \left [\|z_{k}\|^2 \Big | \tau_b  > k \right]  \nonumber \\
 & \leq & \PP\left(\|z_k\| \leq b \Big| \tau_b > k{-}1\right ) \E \left [\|z_{k}\|^2 \Big | \tau_b  > k \right] \nonumber \\
 & & \quad \quad + ~ \PP\left(\|z_k\| > b \Big| \tau_b > k{-}1\right ) \E \left [\|z_{k}\|^2 \Big | \tau_b  = k \right]  \nonumber \\
& = & \PP\left(\|z_k\| \leq b \Big| \tau_b > k{-}1\right ) \E \left [\|z_{k}\|^2 \Big | \|z_k\|\leq b,\tau_b  > k{-}1 \right] \nonumber \\
 & & \quad \quad + ~ \PP\left(\|z_k\| > b \Big| \tau_b > k{-}1\right ) \E \left [\|z_{k}\|^2 \Big | \|z_k\|>b, \tau_b > k{-}1 \right]  \nonumber \\
& = & \E \left [\|z_{k}\|^2 \One_{\{\|z_k\|\leq b\}} \Big | \tau_b  > k{-}1 \right]
+ \E \left [\|z_{k}\|^2 \One_{\{\|z_k\|> b\}}\Big | \tau_b  > k{-}1 \right] \nonumber \\
& = & \E \left [\|z_{k}\|^2 \Big | \tau_b  > k{-}1 \right]. \label{step-2}
\end{eqnarray}
The result now follows by combining \eqref{step-1} and \eqref{step-2}.
\end{proof}

We can now use Lemma \ref{full-contraction} to control (i) the probability of the event that the error $\|z_k\|$ exceeds $b$ at some point (i.e. $\{\tau_b < \infty\}$), and (ii) the expected decay of squared error $\|z_k\|^2$ conditional on the event that the error remains bounded by $b$ (i.e. $\{\tau_b = \infty\}$).

\begin{lemma}\label{bound-error-decay}
Suppose \eqref{conditional-contraction} holds. Then for any $k\geq 1$ we have
\begin{equation}
 \E \left [\|z_{k}\|^2 \Big | \tau_b  = \infty \right] \leq \frac{1}{1-\PP \left (\tau_b < \infty \right)}
 \rho^k \|z_0\|^2. 
\end{equation}
and for any $a>0$
\begin{equation}
 \PP \left ( \|z_k\| \geq \rho^{k/2} a \right) \leq \left(\frac{\|z_0\|}{a}\right)^2 +  \PP \left (\tau_b < \infty \right ).
\end{equation} 
\end{lemma}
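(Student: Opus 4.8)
The plan is to obtain both inequalities as straightforward consequences of the conditional contraction estimate \eqref{recursive-contraction-outcome} in Lemma \ref{full-contraction}, which states that $\E[\|z_k\|^2 \mid \tau_b > k{-}1] \leq \rho^k \|z_0\|^2$. The single idea behind both parts is to split the relevant expectation (resp.\ probability) according to whether the event $A_k := \{\tau_b > k{-}1\}$ occurs, and to exploit the two obvious inclusions $\{\tau_b = \infty\} \subseteq A_k$ and $\{\tau_b \leq k{-}1\} \subseteq \{\tau_b < \infty\}$, equivalently $\One_{\{\tau_b = \infty\}} \leq \One_{A_k}$ and $\One_{A_k^c} \leq \One_{\{\tau_b < \infty\}}$.

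For the first bound, I would write $\E[\|z_k\|^2 \mid \tau_b = \infty] = \PP(\tau_b = \infty)^{-1}\, \E[\|z_k\|^2 \One_{\{\tau_b = \infty\}}]$ and, using $\|z_k\|^2 \geq 0$ together with $\One_{\{\tau_b=\infty\}} \leq \One_{A_k}$, estimate $\E[\|z_k\|^2 \One_{\{\tau_b=\infty\}}] \leq \E[\|z_k\|^2 \One_{A_k}] = \PP(A_k)\, \E[\|z_k\|^2 \mid A_k] \leq \PP(A_k)\,\rho^k\|z_0\|^2 \leq \rho^k\|z_0\|^2$, where the penultimate step is \eqref{recursive-contraction-outcome} and the last uses $\PP(A_k)\leq 1$. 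Dividing by $\PP(\tau_b=\infty) = 1 - \PP(\tau_b < \infty)$ finishes this part; in the degenerate case $\PP(\tau_b<\infty)=1$ the asserted inequality is read as vacuously true (with $+\infty$ on the right).

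For the second bound, I would decompose
$$ \PP\big(\|z_k\| \geq \rho^{k/2} a\big) = \PP\big(\|z_k\| \geq \rho^{k/2} a,\, A_k\big) + \PP\big(\|z_k\| \geq \rho^{k/2} a,\, A_k^c\big), $$
bound the second summand by $\PP(A_k^c) \leq \PP(\tau_b < \infty)$, and apply Markov's inequality to $\|z_k\|^2$ under the conditional law given $A_k$ to control the first: this gives $\PP(\|z_k\| \geq \rho^{k/2}a,\, A_k) \leq \PP(A_k)\,(\rho^k a^2)^{-1}\,\E[\|z_k\|^2 \mid A_k] \leq (\rho^k a^2)^{-1}\,\rho^k\|z_0\|^2 = (\|z_0\|/a)^2$, again by \eqref{recursive-contraction-outcome} and $\PP(A_k) \leq 1$. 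Summing the two estimates yields the claim.

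I do not anticipate a genuine obstacle here: the whole argument is bookkeeping layered on top of Lemma \ref{full-contraction}. The only places that demand mild care are orienting the indicator inequalities correctly (so that nonnegativity of $\|z_k\|^2$ is invoked exactly where an unavailable two-sided bound would otherwise be needed) and the degenerate case $\PP(\tau_b<\infty)=1$ in the first statement.
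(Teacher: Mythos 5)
Your proposal is correct and follows essentially the same route as the paper's proof: the first bound via the inclusion $\{\tau_b=\infty\}\subseteq\{\tau_b>k{-}1\}$ combined with \eqref{recursive-contraction-outcome}, and the second via splitting on $\{\tau_b>k{-}1\}$ and applying Markov's inequality to the conditional expectation. The only (harmless) addition is your explicit handling of the degenerate case $\PP(\tau_b<\infty)=1$, which the paper leaves implicit.
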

\begin{proof}
For the first claim it suffices to observe that $\{\tau_b = \infty\} \subset \{\tau_b > k{-}1\}$ so that
$$
\PP \left (\tau_b = \infty \right) \E \left [\|z_{k}\|^2 \Big | \tau_b  = \infty \right] \leq
\PP \left (\tau_b > k{-}1 \right) \E \left [\|z_{k}\|^2 \Big | \tau_b  > k{-}1 \right].
$$
The result follows by bounding the right hand side of this inequality using \eqref{recursive-contraction-outcome}.

For the second claim, note that 
\begin{eqnarray}
\PP \left ( \|z_k\| \geq \epsilon \right) 
& = & \PP \left ( \|z_k\| \geq \epsilon \Big | \tau_b > k{-}1 \right) \PP \left (\tau_b > k{-}1 \right )
+ \PP \left ( \|z_k\| \geq \epsilon \Big | \tau_b \leq k{-}1 \right) \PP \left (\tau_b \leq k{-}1 \right )
\nonumber \\
& \leq & \PP \left ( \|z_k\| \geq \epsilon \Big | \tau_b > k{-}1 \right) + \PP \left (\tau_b < \infty \right )
\nonumber \\
& \leq & \epsilon^{-2} \E \left [\|z_{k}\|^2 \Big | \tau_b > k{-}1 \right]  + \PP \left (\tau_b < \infty \right ).
\end{eqnarray}
The result follows by setting $\epsilon = \rho^{k/2}a$ and using \eqref{recursive-contraction-outcome} again.
\end{proof}

Next we give a bound on $\PP \left (\tau_b < \infty \right)$.

\begin{lemma}\label{bound-escape-prob}
 Suppose \eqref{conditional-contraction} holds.
 Then 
 \begin{equation}
  b^2\, \PP  \left (\tau_b < \infty \right) + (\rho^{-1}-1)
\sum_{k=1}^\infty \PP\left(\tau_b > k\right )  \E \left [\|z_{k+1}\|^2 \Big | \tau_b  > k \right] \leq \rho \|z_0\|^2, 
 \end{equation}
 in particular we have
 \begin{equation}
  \PP  \left (\tau_b < \infty \right) \leq \rho \left( \frac{\|z_0\|}{b}\right)^2.
 \end{equation}
\end{lemma}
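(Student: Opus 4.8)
The plan is to telescope the one-step contraction \eqref{conditional-contraction}, this time \emph{keeping} (rather than discarding, as in the proof of Lemma \ref{full-contraction}) the squared-error mass that the iterate carries at each step where it might jump above level $b$; that mass is precisely what produces the two terms on the left of the stated inequality. Throughout I assume $\|z_0\|\le b$, since otherwise $\tau_b=0$ deterministically. For brevity set
\[
s_k := \E\!\left[\|z_k\|^2\,\One_{\{\tau_b>k-1\}}\right], \qquad d_k := \E\!\left[\|z_k\|^2\,\One_{\{\tau_b=k\}}\right],
\]
so that $s_k=\PP(\tau_b>k{-}1)\,\E[\|z_k\|^2\mid\tau_b>k{-}1]$, $s_0=\|z_0\|^2$, $d_0=0$, and the $k$-th summand appearing in the lemma equals $\PP(\tau_b>k)\,\E[\|z_{k+1}\|^2\mid\tau_b>k]=s_{k+1}$.

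The first step is the basic inequality $\rho^{-1}s_{k+1}+d_k\le s_k$, valid for all $k\ge 0$. To get it, note that since $x_k$, hence $z_k$, is a function of $\phi_{t(0)},\dots,\phi_{t(k-1)}$, the event $\{\tau_b>k\}$ lies in $\sF_k$ and is contained in $\{\|z_k\|\le b\}$; writing \eqref{conditional-contraction} in the equivalent form $\E[\|z_{k+1}\|^2\One_{\{\|z_k\|\le b\}}\mid\sF_k]\le\rho\|z_k\|^2\One_{\{\|z_k\|\le b\}}$, multiplying by $\One_{\{\tau_b>k\}}$, and taking expectations, one obtains $s_{k+1}\le\rho\,\E[\|z_k\|^2\One_{\{\tau_b>k\}}]=\rho(s_k-d_k)$, the last equality because $\{\tau_b>k{-}1\}=\{\tau_b>k\}\sqcup\{\tau_b=k\}$. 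Now I would rewrite the basic inequality for $k\ge 1$ as $s_k-\rho^{-1}s_{k+1}\ge d_k\ge 0$ and sum it over $k=1,\dots,N$. The left side telescopes to $s_1-(\rho^{-1}-1)\sum_{j=2}^{N}s_j-\rho^{-1}s_{N+1}$, and \eqref{recursive-contraction-outcome} forces $s_{N+1}\le\rho^{\,N+1}\|z_0\|^2\to 0$, so letting $N\to\infty$ (all series consist of nonnegative terms) gives
\[
s_1\;\ge\;(\rho^{-1}-1)\sum_{j\ge 2}s_j\;+\;\sum_{k\ge 1}d_k.
\]
Feeding in the $k=0$ instance of the basic inequality, namely $s_1\le\rho\,s_0=\rho\|z_0\|^2$, yields
\[
(\rho^{-1}-1)\sum_{j\ge 2}s_j\;+\;\sum_{k\ge 1}d_k\;\le\;\rho\|z_0\|^2.
\]

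To finish I would use the right half of \eqref{sandwich}: $d_k=\PP(\tau_b=k)\,\E[\|z_k\|^2\mid\tau_b=k]\ge b^2\,\PP(\tau_b=k)$, whence $\sum_{k\ge 1}d_k\ge b^2\sum_{k\ge 1}\PP(\tau_b=k)=b^2\,\PP(\tau_b<\infty)$, the last step because $\PP(\tau_b=0)=0$ under the assumption $\|z_0\|\le b$. Since $\sum_{j\ge 2}s_j=\sum_{k\ge 1}s_{k+1}=\sum_{k=1}^{\infty}\PP(\tau_b>k)\,\E[\|z_{k+1}\|^2\mid\tau_b>k]$, the previous display becomes exactly the first inequality of the lemma; dropping the nonnegative first term and dividing by $b^2$ gives the ``in particular'' bound $\PP(\tau_b<\infty)\le\rho(\|z_0\|/b)^2$.

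The only genuinely delicate point is the derivation of the basic inequality $\rho^{-1}s_{k+1}+d_k\le s_k$ from the conditional contraction: one must verify the measurability facts ($\{\tau_b>k\}\in\sF_k$, via $z_k$ being a function of $\phi_{t(0)},\dots,\phi_{t(k-1)}$) that license multiplying the conditional expectation by $\One_{\{\|z_k\|\le b\}}$ and by $\One_{\{\tau_b>k\}}$ before taking ordinary expectations, and one must be careful to sum from $k=1$ and treat $k=0$ separately — it is the bound $s_1\le\rho\|z_0\|^2$ for the very first step that produces the factor $\rho$ in the final estimate. Everything after that is a bounded telescope, with convergence of every series guaranteed by \eqref{recursive-contraction-outcome}.
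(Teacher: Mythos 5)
Your proof is correct and is essentially the paper's own argument: both establish the one-step drift inequality that retains the escape mass at level $b$ (your $\rho^{-1}s_{k+1}+d_k\le s_k$ is precisely the chain of inequalities the paper derives from \eqref{sandwich} and \eqref{step-1}), telescope it from $k=1$, and invoke the first-step contraction $s_1\le\rho\|z_0\|^2$ to produce the factor $\rho$. The only difference is notational—you work with indicator-weighted expectations $s_k,d_k$ where the paper writes everything as conditional expectations times probabilities—so no further comment is needed.
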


\begin{proof}
We start by noting that 
\begin{eqnarray}
\PP \left( \tau_b > k{-}1 \right) \E \left [\|z_{k}\|^2 \Big | \tau_b  > k{-}1 \right] 
&  = &\quad \PP\left(\|z_k\| > b , \tau_b > k{-}1\right ) \E \left [\|z_{k}\|^2 \Big | \|z_k\| > b, \tau_b  > k{-}1 \right] \nonumber \\
& \quad &+ ~ 
 \PP\left(\|z_k\| \leq b , \tau_b > k{-}1\right ) \E \left [\|z_{k}\|^2 \Big | \|z_k\| \leq b, \tau_b  > k{-}1 \right]
  \nonumber \\
&  \geq &\quad \PP\left(\tau_b = k\right ) b^2 + ~ 
 \PP\left(\tau_b > k\right ) \E \left [\|z_{k}\|^2 \Big | \tau_b  > k \right]
  \nonumber \\
 &  \geq &\quad \PP\left(\tau_b = k\right ) b^2 + ~ 
 \PP\left(\tau_b > k\right ) \rho^{-1} \E \left [\|z_{k+1}\|^2 \Big | \tau_b  > k \right] \nonumber \\
 &  \geq &\quad \PP\left(\tau_b = k\right ) b^2 + ~ 
 \PP\left(\tau_b > k\right )  \E \left [\|z_{k+1}\|^2 \Big | \tau_b  > k \right] \nonumber \\
 & \quad &+ ~ (\rho^{-1}-1) \PP\left(\tau_b > k\right )  \E \left [\|z_{k+1}\|^2 \Big | \tau_b  > k \right] 
\end{eqnarray}
where in the first inequality we have used (the second inequality of) \eqref{sandwich} and in the second equality \eqref{step-1}.
Since $\E \left [\|z_{k}\|^2 \Big | \tau_b  > k{-}1 \right]$ has exponential decay we can sum both sides from $k=1$ to $\infty$. After cancelling the common term (the sum from $k=2$ to $\infty$) we obtain
$$
\PP \left( \tau_b > 0 \right) \E \left [\|z_1\|^2 \Big | \tau_b  > 0 \right] \geq 
b^2 \sum_{k=1}^\infty \PP\left(\tau_b = k\right ) +  (\rho^{-1}-1)
\sum_{k=1}^\infty \PP\left(\tau_b > k\right )  \E \left [\|z_{k+1}\|^2 \Big | \tau_b  > k \right] .
$$
The result follows by noting that the left hand side is bounded above by $\rho \|z_0\|^2$.
\end{proof}

We are now ready to prove Theorem \ref{main-theorem}. 
\paragraph{\it Proof of Theorem \ref{main-theorem}.}
We choose $\delta_0$ as implied by Lemma \ref{admissible-implies-conditional-contraction} and apply  Lemma \ref{admissible-implies-conditional-contraction} and 
\ref{admissible-whp} together where we set the values of $C$ and $c$ according to the choice $\delta = \delta_0$. Hence \eqref{conditional-contraction} is valid with $b = \delta_0 \|x\|$.

Let $\varepsilon \in (0,1)$ and $x_0$ such that $\mathrm{dist}(x,x_0) \leq \delta_0 \varepsilon$ be be given. Let us assume $\|x - x_0 \| \leq \|-x - x_0\|$ so that 
$\mathrm{dist}(x,x_0) = \|x - x_0\|$. Otherwise we replace $x$ by $-x$ below. With \eqref{conditional-contraction}, we have the conclusions of Lemmas \ref{bound-error-decay} and \ref{bound-escape-prob} at our disposal. The stability event is simply equal to $\{ \tau_b = \infty\}$.
We bound $\PP ( \tau_b < \infty )$ by $\rho (\delta_0 \varepsilon \|x\|/\delta_0 \|x\|)^2 \leq \varepsilon^2$. Lemma \ref{bound-error-decay} then yields the decay bound
$$
\E ~\left [\|z_k\|^2 \One_{\{\tau_b = \infty\}} \right] \leq e^{-k/4d} \|z_0\|^2,
$$
where we have used $\rho = 1-\frac{1}{4d} \leq e^{-1/4d}$.
\qed

\section*{Appendix: A simple moment calculation}

Let the random variable $\phi$ be uniformly distributed on $\uS^{d-1}$ and $e_1,\dots,e_d$ be the standard basis of $\R^d$. By the unitary invariance of the uniform distribution on $\uS^{d-1}$, $\E\, |\inner{z}{\phi}|^2$ independent of $z \in \uS^{d-1}$. Hence we have
$$ 1 =  \E\,\|\phi\|^2 = \E\,\sum_{j=1}^d|\inner{e_j}{\phi}|^2 = d \,
 \E\,|\inner{e_1}{\phi}|^2 $$
so that $\E\, |\inner{e_1}{\phi}|^2 = \frac{1}{d}$, and therefore $\E\, |\inner{z}{\phi}|^2 = \frac{\|z\|^2}{d}$ for general $z \in \R^d$.

Let us now consider $\E\, |\inner{z}{\phi}|^4$ which is also independent of $z \in \uS^{d-1}$. Call its common value $\alpha$. It is also clear by symmetry that for $i\not= j$, the quantity
$\E\, |\inner{e_i}{\phi}|^2|\inner{e_j}{\phi}|^2$ is independent of the pair $(i,j)$. Call its common value $\beta$.

We have 
$$\frac{1}{d} = \E\,|\inner{e_1}{\phi}|^2 = \E\,|\inner{e_1}{\phi}|^2 \|\phi\|^2 =
\E\,|\inner{e_1}{\phi}|^2\sum_{j=1}^d|\inner{e_j}{\phi}|^2 = \alpha + (d-1)\beta.$$

We also have 
$$ 2\alpha = \E\, \left| \inner{\frac{e_1 + e_2}{\sqrt{2}}}{\phi} \right|^4 + 
 \E\, \left| \inner{\frac{e_1 - e_2}{\sqrt{2}}}{\phi} \right|^4
 = \frac{1}{2}\left(\E\, |\inner{e_1}{\phi}|^4+\E\, |\inner{e_2}{\phi}|^4 + 6 \E\, |\inner{e_1}{\phi}|^2|\inner{e_2}{\phi}|^2 \right)  = \alpha + 3\beta,$$
so that $\alpha = 3 \beta$. Solving these two equations we get $\alpha = \frac{3}{d(d+2)}$, and therefore
$\E\, |\inner{z}{\phi}|^4 = \frac{3\|z\|^4}{d(d+2)}$ for general $z \in \R^d$.

\paragraph{Acknowledgement}
The authors gratefully acknowledge Y.~Shuo Tan and R.~Vershynin for pointing out a mistake in the first version of this paper. H.J.~thanks Yuri Bakhtin and Afonso Bandeira for various valuable discussions.

\bibliographystyle{plain}
\bibliography{Nonlinear_Kaczmarz}

\end{document}